\newtheorem{theorem}{Theorem}[section]
\newtheorem{lemma}[theorem]{Lemma}
\newtheorem{corollary}[theorem]{Corollary}
\newtheorem{proposition}[theorem]{Proposition}
\newtheorem{remark}[theorem]{Remark}
\newtheorem{definition}[theorem]{Definition}
\newtheorem{example}[theorem]{Example}
\newtheorem*{question*}{Question}
\newcommand{\dsubseteq}{\mathrel{\rotatebox[origin=c]{45}{$\subseteq$}}}
\newcommand{\bsubseteq}{\mathrel{\rotatebox[origin=c]{-45}{$\subseteq$}}}
\begin{document}
\title[Structure and a duality of binary operations on monoids and groups]{Structure and a duality of binary operations on monoids and groups}
\author{Masayoshi Kaneda}
\address{Department of Mathematics and Natural Sciences, College of Arts and Sciences, American University of Kuwait, P.O. Box 3323, Safat 13034 Kuwait}
\email{mkaneda@uci.edu}
\date{\today}
\thanks{{\em Mathematics subject classification 2010.} Primary 20N02; Secondary 20E34, 20M10}
\thanks{{\em Key words and phrases.} Sets with binary operations, semigroups, monoids, groups}
\begin{abstract}In this paper we introduce novel views of monoids and groups. More specifically, for a given set $S$, let $S^{S\times S}$ be the set of binary operations on $S$. We equip $S^{S\times S}$ with canonical binary operations induced by the elements of $S$. Let $S^{S\times S}_{mn}$ (respectively, $S^{S\times S}_{gr}$) be the set of binary operations that make $S$ monoids (respectively, groups). Then we have the following ``duality'': for each $z\in S^{S\times S}_{mn}$ a certain subset of $S^{S\times S}$, denoted by $S^*_z$, is a monoid with a canonical binary operation and is isomorphic to $(S,z)$. If $z\in S^{S\times S}_{gr}$, then $S^{S\times S}_{gr}$ can be partitioned into copies of $S^*_z$. We also give a new characterization of group binary operations which distinguishes them from the other binary operations. These results give us new insights into monoids and groups, and will provide new tools and directions in studying these objects.
\end{abstract}
\maketitle
\section{Introduction and Preliminaries.}\label{section:intro}
Let us recall very basic definitions. A \emph{magma} is a set equipped with a binary operation. A \emph{semigroup} is a magma in which its binary operation is associative. A \emph{monoid} is a semigroup with a two-sided identity. A \emph{group} is a monoid in which every element has a two-sided inverse. A \emph{homomorphism} is a function between magmas preserving their binary operations. An \emph{isomorphism} is a homomorphism which is bijective (i.e., one-to-one and onto). An \emph{automorphism} is an isomorphism from a magma onto itself.

For a given set $S$, let us denote by $S^{S\times S}$ the set of all binary operations on $S$; that is, the set of all functions from $S\times S$ to $S$. Each element of $S^{S\times S}$ makes $S$ a distinct magma if no isomorphic identification is made. In general the set $S^{S\times S}$ is huge compared with $S$ if $S$ is nontrivial. Indeed, if $S$ is a nonempty finite set, then $|S^{S\times S}|=|S|^{|S|^2}$. Thus if $|S|=2$, then $|S^{S\times S}|=16$; if $|S|=3$, then $|S^{S\times S}|=19683$; if $|S|=4$, then $|S^{S\times S}|=4294967296$; so on. In this paper, however, we mostly deal with relatively small subsets of $S^{S\times S}$ with certain properties which we shall define in what follows.

Let us denote a magma $S$ with binary operation $z\in S^{S\times S}$ by the ordered pair $(S,z)$. We denote by $azb\in S$ the image of $(a,b)\in S\times S$ by $z\in S^{S\times S}$.

Now we define the following subclasses of $S^{S\times S}$.

$S^{S\times S}_{sg}$ is the set of elements $z\in S^{S\times S}$ for which $(S,z)$ is a semigroup; that is, the binary operation $z$ is associative:$$S^{S\times S}_{sg}:=\{z\in S^{S\times S}\,|\,(azb)zc=az(bzc),\forall a,b,c\in S\}.$$

$S^{S\times S}_{mn}$ is the set of elements $z\in S^{S\times S}_{sg}$ for which $(S,z)$ is a monoid; that is, $(S,z)$ is a semigroup with a two-sided identity (necessarily unique):$$S^{S\times S}_{mn}:=\{z\in S^{S\times S}_{sg}\,|\,\exists e\in S\text{ such that }eza=a=aze,\forall a\in S\}.$$

$S^{S\times S}_{gr}$ is the set of elements $z\in S^{S\times S}_{mn}$ for which $(S,z)$ is a group; that is, $(S,z)$ is a monoid in which every element has a two-sided inverse (necessarily unique):$$S^{S\times S}_{gr}:=\{z\in S^{S\times S}_{mn}\,|\,\forall a\in S, \exists b\in S\text{ such that }bza=azb=e\},$$where $e$ is the identity of $(S,z)$.

Besides, we define the following.
\begin{definition}\label{de:nondegenerate}{\em We say that an element $z\in S^{S\times S}$ is \textbf{nondegenerate} if $z$ is onto; that is, for every $a\in S$ there exist $b,c\in S$ such that $a=bzc$. We denote by $S^{S\times S}_{nd}$ the set of all nondegenerate elements in $S^{S\times S}$.}
\end{definition}Clearly,
\begin{alignat*}{5}
    && && S^{S\times S}_{nd} && && \\
    && \dsubseteq && && \bsubseteq && \\
    S^{S\times S}_{gr}\subseteq S^{S\times S}_{mn}\subseteq S^{S\times S}_{nd}\cap S^{S\times S}_{sg} && && && && S^{S\times S}\\
    && \bsubseteq && && \dsubseteq && \\
    && && S^{S\times S}_{sg} && &&
\end{alignat*}and these sets are nonempty as long as $S\neq\emptyset$. It is not hard to see that if $|S|\ge2$, then each inclusion is proper and neither $S^{S\times S}_{nd}$ nor $S^{S\times S}_{sg}$ is included in the other. However, elements of $S^{S\times S}_{nd}$ and $S^{S\times S}_{sg}$ are closely related each other. Indeed, any element of $S^{S\times S}$ that is compatible with an element of $S^{S\times S}_{nd}$ must be an element of $S^{S\times S}_{sg}$; that is, such an element must be associative. See Proposition~\ref{pr:dual}~(1) and Question after the proposition.

When we try to equip the set $S^{S\times S}$ with a binary operation naturally induced by $a\in S$, we encounter a situation in which we must consider associativity involving more than one binary operation on $S$ as seen in what follows.

Each element $a\in S$ induces two canonical binary operations $\hat{\,}$ and $\check{\,}$ on $S^{S\times S}$:
$$b(z_1\hat{a}z_2)c:=(bz_1a)z_2c,\quad\forall b,c\in S,\quad\forall z_1,z_2\in S^{S\times S}.$$
$$b(z_1\check{a}z_2)c:=bz_1(az_2c),\quad\forall b,c\in S,\quad\forall z_1,z_2\in S^{S\times S}.$$
\begin{definition}{\em We say that $z_1\in S^{S\times S}$ and $z_2\in S^{S\times S}$ are \textbf{compatible} if $z_1\hat{a}z_2=z_1\check{a}z_2$ and $z_2\hat{a}z_1=z_2\check{a}z_1$ hold for every $a\in S$; that is, if the following condition, which we shall call \textbf{multiple associativity} or \textbf{multi-associativity}, holds.$$(az_1b)z_2c=az_1(bz_2c)\text{ and }(az_2b)z_1c=az_2(bz_1c),\,\forall a,b,c\in S.$$}
\end{definition}The compatibility is a symmetric relation; however, it is neither reflexive nor transitive unless one restricts the domain. Indeed, $S^{S\times S}_{sg}$ can be defined as the set of those elements in $S^{S\times S}$ each of which is compatible with itself; thus the compatibility is reflexive on $S^{S\times S}_{sg}$. Also note that the compatibility is transitive on $S^{S\times S}_{nd}$ which follows from Lemma~\ref{lm:key}.

Whenever $z_1$ and $z_2$ are compatible, we simply write $az_1bz_2c$ for $(az_1b)z_2c$ or $az_1(bz_2c)$ without ambiguity, and write $z_1\hat{a}z_2$ for $z_1\check{a}z_2$, where $z_1,z_2\in S^{S\times S}$, $a,b,c\in S$. In this case $z_1\hat{a}z_2$ and $z_1\hat{b}z_2$ are compatible for all $a,b\in S$, for $(c(z_1\hat{a}z_2)d)(z_1\hat{b}z_2)f=((cz_1a)z_2d)z_1(bz_2f)=(cz_1a)z_2(dz_1(bz_2f))=c(z_1\hat{a}z_2)(d(z_1\hat{b}z_2)f),\forall a,b,c,d,f\in S$. In particular, $z_1\hat{a}z_2$ is compatible with itself; thus $z_1\hat{a}z_2\in S^{S\times S}_{sg}$ even if $z_1$ or $z_2$ is not compatible with itself.

Now we collect all the elements that are compatible with a given element $z\in S^{S\times S}$.
\begin{definition}\label{de:dual}{\em For each $z\in S^{S\times S}$, define$$S^*_z:=\{z'\in S^{S\times S}\,|\,(azb)z'c=az(bz'c)\text{ and }(az'b)zc=az'(bzc),\,\forall a,b,c\in S\};$$that is, $S^*_z$ is the set of all elements in $S^{S\times S}$ that is compatible with $z$. We call $S^*_z$ the \textbf{dual} of $S$ with respect to the binary operation $z$.}
\end{definition}The reason why we call $S^*_z$ the dual of $S$ will become clear in Theorem~\ref{th:main}~(1). Note that $z\in S^*_z$ if and only if $z\in S^{S\times S}_{sg}$. Also note that for $z_1,z_2\in S^{S\times S}$, $z_1\in S^*_{z_2}$ if and only if $z_2\in S^*_{z_1}$ since compatibility is symmetric. However, $z_1\in S^*_{z_2}$ does not always imply that $S^*_{z_1}=S^*_{z_2}$ unless $z_1\in S^{S\times S}_{gr}$, or $z_2\in S^{S\times S}_{gr}$, or $z_1,z_2\in S^{S\times S}_{nd}$. This means that $z_1$ may not be compatible with all elements of $S^*_{z_2}$ even though it is compatible with $z_2$. See Lemma~\ref{lm:key}, Corollary~\ref{co:main}, and Example~\ref{ex:n=3} (toward the end of the example).

The main result of this paper is Theorem~\ref{th:main}. In part~(1) of the theorem we show that if $z\in S^{S\times S}_{mn}$, then there is a bijection $\phi$ from $S$ onto $S^*_z$ such that the monoid $(S,\phi(a))$ is isomorphic to $(S^*_z,\hat{a})$ via $\phi$ for each $a\in S$. Part~(2) of the theorem shows that $S^{S\times S}_{gr}$ is partitioned into copies of $S^*_z$ if $z\in S^{S\times S}_{gr}$. Corollary~\ref{co:main} provides a new characterization of group binary operations.

This work was motivated by the author's precedent works \cite{Kaneda 2003}, \cite{Kaneda 2007}, and \cite{Kaneda and Paulsen 2004} (joint with V.~I.~Paulsen) in which operator algebra products are characterized using quasi-multipliers of operator spaces. That is, the operator algebra products a given operator space can be equipped with are precisely the bilinear mappings on the operator space that are implemented by contractive quasi-multipliers. The present paper is the outcome of an attempt to introduce a counterpart to quasi-multipliers in the most primitive setting in pure algebra. The elements of $S^{S\times S}$ play roles more or less similar to the quasi-multipliers.
\section{Results.}\label{section:results}
The following lemma is useful throughout this section.
\begin{lemma}\label{lm:key}Let $z_1\in S^{S\times S}_{nd}$ and $z_2\in S^{S\times S}$. If $z_1\in S^*_{z_2}$, or, equivalently, $z_2\in S^*_{z_1}$, then $S^*_{z_1}\subseteq S^*_{z_2}$. Therefore, if, in addition, $z_2\in S^{S\times S}_{nd}$, then $S^*_{z_1}=S^*_{z_2}$.
\end{lemma}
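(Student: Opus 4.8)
The plan is to prove the inclusion $S^*_{z_1}\subseteq S^*_{z_2}$ directly: take an arbitrary $w\in S^*_{z_1}$ and verify the two multi-associativity identities $(az_2b)wc=az_2(bwc)$ and $(awb)z_2c=aw(bz_2c)$ for all $a,b,c\in S$. The only inputs are the two identities defining $w\in S^*_{z_1}$, namely $(az_1b)wc=az_1(bwc)$ and $(awb)z_1c=aw(bz_1c)$; the two identities defining $z_1\in S^*_{z_2}$, namely $(az_1b)z_2c=az_1(bz_2c)$ and $(az_2b)z_1c=az_2(bz_1c)$; and the surjectivity of $z_1$.

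The crucial move is to apply nondegeneracy of $z_1$ to the \emph{middle} argument: given $a,b,c$, choose $p,q\in S$ with $b=pz_1q$. For the first identity I would then compute, using in turn the $z_1$–$z_2$ identity and the $w$–$z_1$ identity,
$(az_2b)wc=(az_2(pz_1q))wc=((az_2p)z_1q)wc=(az_2p)z_1(qwc)$,
and symmetrically
$az_2(bwc)=az_2((pz_1q)wc)=az_2(pz_1(qwc))=(az_2p)z_1(qwc)$,
so the two sides coincide. For the second identity, again with $b=pz_1q$, I would push the $z_1$ outward on both sides:
$(awb)z_2c=(aw(pz_1q))z_2c=((awp)z_1q)z_2c=(awp)z_1(qz_2c)$
and
$aw(bz_2c)=aw((pz_1q)z_2c)=aw(pz_1(qz_2c))=(awp)z_1(qz_2c)$.
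Hence $w\in S^*_{z_2}$, proving $S^*_{z_1}\subseteq S^*_{z_2}$.

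For the last assertion, recall that compatibility is symmetric, so the hypothesis $z_1\in S^*_{z_2}$ is the same as $z_2\in S^*_{z_1}$; if in addition $z_2\in S^{S\times S}_{nd}$, the inclusion just established applies verbatim with the roles of $z_1$ and $z_2$ exchanged and yields $S^*_{z_2}\subseteq S^*_{z_1}$, whence $S^*_{z_1}=S^*_{z_2}$.

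I do not expect any genuine obstacle beyond bookkeeping: the content is entirely in realizing that one should substitute $b=pz_1q$ in the middle slot (not in the first or third), after which each of the four hypotheses is used exactly once and every parenthesization collapses to the common value $(az_2p)z_1(qwc)$ (respectively $(awp)z_1(qz_2c)$). Getting the order of the four applications right is the only thing that requires care.
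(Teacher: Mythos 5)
Your proposal is correct and follows essentially the same route as the paper: both use nondegeneracy of $z_1$ to write the middle element as $b=pz_1q$ and then rearrange parentheses via the compatibility identities, with $z_1$ acting as a ``catalyst'' between $w$ and $z_2$; the only cosmetic difference is that you reduce both sides to a common expression while the paper writes a single chain of equalities from one side to the other. The handling of the final assertion by symmetry of compatibility also matches the paper's intent.
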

\begin{proof}Suppose that $z_1\in S^{S\times S}_{nd}$, $z_2\in S^{S\times S}$, and $z_1\in S^*_{z_2}$. Let $z\in S^*_{z_1}$ and $a,b,c\in S$. Then there exist $d,f\in S$ such that $b=dz_1f$, so that
$(azb)z_2c=(az(dz_1f))z_2c=((azd)z_1f)z_2c=(azd)z_1(fz_2c)=az(dz_1(fz_2c))=az((dz_1f)z_2c)=az(bz_2c)$. Similarly, $(az_2b)zc=az_2(bzc)$, hence $S^*_{z_1}\subseteq S^*_{z_2}$.
\end{proof}It is interesting to note in the proof above that $z_1$ plays the role of a ``catalyst'' to make $z$ and $z_2$ compatible.

The following proposition tells us that if $z\in S^{S\times S}_{nd}$, then any two elements in $S^*_z$ are compatible (we shall refer to this property as the \emph{pairwise compatibility} of the elements of $S^*_z$), and $S^*_z$ is closed under the binary operation $\hat{a}$ for each $a\in S$, and the multiple associativity works in $S^*_z$. That is, $(S^*_z,\hat{a})$ is a semigroup for each $a\in S$. Note, however, that $z\notin S^*_z$ in general as remarked after Definition~\ref{de:dual}. Also note that assuming $z\in S^{S\times S}_{nd}$ is essential in the proposition since otherwise the multiple associativity in $S$ in part~(1) might not hold. See the end of Example~\ref{ex:n=3}.
\begin{proposition}\label{pr:dual}Let $z\in S^{S\times S}_{nd}$. Then the following hold.
\begin{enumerate}
  \item\emph{(Multiple Associativity in $S$)} $(az_1b)z_2c=az_1(bz_2c),\forall a,b,c\in S,\forall z_1,z_2\in S^*_z$. In particular, $S^*_z\subseteq S^{S\times S}_{sg}$. Furthermore, if $z\notin S^{S\times S}_{sg}$, then $S^*_z\subseteq S^{S\times S}_{sg}\setminus S^{S\times S}_{nd}$.
  \item\emph{(Closedness of $S^*_z$)} If $z_1,z_2\in S^*_z$, then $z_1\hat{a}z_2\in S^*_z,\forall a\in S$.
  \item\emph{(Multiple Associativity in $S^*_z$)} $(z_1\hat{a}z_2)\hat{b}z_3=z_1\hat{a}(z_2\hat{b}z_3),\forall z_1,z_2,z_3\in S^*_z,\forall a,b\in S$.
\end{enumerate}
\end{proposition}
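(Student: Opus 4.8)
The plan is to prove the three parts in order, using Lemma~\ref{lm:key} as the workhorse. The first observation is that since $z\in S^{S\times S}_{nd}$, for any $z_1\in S^*_z$ we have $z\in S^*_{z_1}$ by symmetry of compatibility, and then Lemma~\ref{lm:key} (with $z_1$ in the role of the nondegenerate operation and $z$ in the role of $z_2$) gives $S^*_{z_1}\subseteq S^*_z$. Wait — Lemma~\ref{lm:key} requires \emph{the first argument} to be nondegenerate, so I should instead apply it with $z$ nondegenerate: from $z_1\in S^*_z$ we get $S^*_z\subseteq S^*_{z_1}$. Hence for $z_1,z_2\in S^*_z$ we have $z_2\in S^*_z\subseteq S^*_{z_1}$, which says precisely that $z_1$ and $z_2$ are compatible, i.e. $(az_1b)z_2c=az_1(bz_2c)$ and $(az_2b)z_1c=az_2(bz_1c)$ for all $a,b,c\in S$. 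This is exactly the Multiple Associativity in $S$. Taking $z_1=z_2$ shows each element of $S^*_z$ is associative, so $S^*_z\subseteq S^{S\times S}_{sg}$. For the last sentence of (1): if some $z_1\in S^*_z$ were nondegenerate, then by Lemma~\ref{lm:key} again (now $z_1$ is the nondegenerate one and $z_1\in S^*_z$ means $z\in S^*_{z_1}$) we would get $S^*_{z_1}=S^*_z$, and since $z\in S^*_{z_1}$ this forces $z\in S^*_z$, i.e. $z\in S^{S\times S}_{sg}$, contradiction; so no element of $S^*_z$ is nondegenerate, giving $S^*_z\subseteq S^{S\times S}_{sg}\setminus S^{S\times S}_{nd}$.

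For part (2), let $z_1,z_2\in S^*_z$ and $a\in S$; I must show $z_1\hat az_2\in S^*_z$, i.e. that $w:=z_1\hat az_2$ is compatible with $z$. This is a direct computation. For $b,c,d\in S$, unwinding the definition of $\hat a$, $(bzc)\,w\,d=((bzc)z_1a)z_2d$; now apply Multiple Associativity in $S$ from part (1) (which gives that $z$ and $z_1$ are compatible, and $z$ and $z_2$ are compatible) to slide the parenthesization: $((bzc)z_1a)z_2d=(bz(cz_1a))z_2d=bz((cz_1a)z_2d)=bz(c\,w\,d)$. The other half, $(b\,w\,c)zd=b\,w\,(czd)$, is symmetric, using compatibility of $z$ with $z_2$ first and then with $z_1$. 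Hence $w\in S^*_z$.

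For part (3), let $z_1,z_2,z_3\in S^*_z$ and $a,b\in S$. By part (2), $z_1\hat az_2\in S^*_z$ and $z_2\hat bz_3\in S^*_z$, so by part (1) all of $z_1,z_2,z_3$ and these composites are pairwise compatible; the identity $(z_1\hat az_2)\hat bz_3=z_1\hat a(z_2\hat bz_3)$ then reduces to a chain of rewrites. Evaluating both sides on an arbitrary pair $(c,d)\in S\times S$: the left side is $c\big((z_1\hat az_2)\hat bz_3\big)d=\big(c(z_1\hat az_2)b\big)z_3d=\big((cz_1a)z_2b\big)z_3d$, while the right side is $c\big(z_1\hat a(z_2\hat bz_3)\big)d=(cz_1a)\,(z_2\hat bz_3)\,d=(cz_1a)z_2\big(bz_3d\big)$; these agree because $z_2$ and $z_3$ are compatible (part (1)), letting us move the parentheses across $(cz_1a)z_2bz_3d$. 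So both sides coincide.

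The only place any care is needed is getting the direction of the inclusion in Lemma~\ref{lm:key} right (the lemma's nondegenerate operation is its \emph{first} argument, $z_1$, and concludes $S^*_{z_1}\subseteq S^*_{z_2}$), and correctly invoking it twice in part (1): once with $z$ nondegenerate to get pairwise compatibility inside $S^*_z$, and once more — in the contrapositive — to rule out nondegenerate elements of $S^*_z$ when $z$ itself is not associative. Parts (2) and (3) are then purely mechanical applications of the multi-associativity established in (1), with no genuine obstacle; the main point to watch is simply tracking which pair of operations one is using associativity of at each rewriting step.
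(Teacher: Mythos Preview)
Your proof is correct and follows essentially the same approach as the paper: part~(1) via Lemma~\ref{lm:key} applied with $z$ as the nondegenerate operation to get $S^*_z\subseteq S^*_{z_1}$ (and the contrapositive for the final claim), part~(2) by the same direct chain of rewrites, and part~(3) by unfolding the $\hat{\,}$ definitions and invoking pairwise compatibility inside $S^*_z$. One small remark: in part~(3) both sides in fact unfold by definition alone to $((cz_1a)z_2b)z_3d$, so neither your appeal to the compatibility of $z_2$ and $z_3$ nor the paper's appeal to part~(2) is strictly necessary there.
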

\begin{proof}
\begin{enumerate}
  \item Let $z_1,z_2\in S^*_z$. Then by Lemma~\ref{lm:key} $S^*_z\subseteq S^*_{z_1}$ since $z\in S^{S\times S}_{nd}$. But $z_2\in S^*_z$, so that $z_2\in S^*_{z_1}$; that is, $z_1$ is compatible with $z_2$. To see the last claim, assume that $z\notin S^{S\times S}_{sg}$ and $S^*_z\cap S^{S\times S}_{nd}\ne\emptyset$. Pick $z_3\in S^*_z\cap S^{S\times S}_{nd}$, then by Lemma~\ref{lm:key} $S^*_{z_3}=S^*_z$, so that $z\in S^*_{z_3}=S^*_z\subseteq S^{S\times S}_{sg}$, a contradiction.
  \item For all $a,b,c,d\in S$, $(bzc)(z_1\hat{a}z_2)d=((bzc)z_1a)z_2d=(bz(cz_1a))z_2d=bz((cz_1a)z_2d)=bz(c(z_1\hat{a}z_2)d)$. Similarly, $(b(z_1\hat{a}z_2)c)zd=b(z_1\hat{a}z_2)(czd)$.
  \item Let $a,b,c,d\in S$ and $z_1,z_2,z_3\in S^*_z$. Then repeated use of the pairwise compatibility of elements in $S^*_z$ proved in part~(1) yields that $c((z_1\hat{a}z_2)\hat{b}z_3)d=(c(z_1\hat{a}z_2)b)z_3d=((cz_1a)z_2b)z_3d=(cz_1(az_2b))z_3d=cz_1((az_2b)z_3d)=cz_1(a(z_2\hat{b}z_3)d)=c(z_1\hat{a}(z_2\hat{b}z_3))d$, where in the last equality we used the fact that $z_2\hat{b}z_3\in S^*_z$, a conclusion from part~(2).
\end{enumerate}
\end{proof}With reference to part~(1) of the above proposition, we leave the following as open question.
\begin{question*}{\em Does $S^*_z$ exhaust $S^{S\times S}_{sg}\setminus S^{S\times S}_{nd}$ when $z$ moves around in $S^{S\times S}_{nd}\setminus S^{S\times S}_{sg}$; that is, $\bigcup\{S^*_z\,|\,z\in S^{S\times S}_{nd}\setminus S^{S\times S}_{sg}\}=S^{S\times S}_{sg}\setminus S^{S\times S}_{nd}$? Or, at least $S^*_z\cap S^{S\times S}_{nd}\ne\emptyset$ whenever $z\in S^{S\times S}_{sg}$? }
\end{question*}An affirmative answer to the first part implies one to the second which says that every associative binary operation is compatible with at least one nondegenerate binary operation (which may or may not be associative).

Now we are in a position to state our main result.
\begin{theorem}\label{th:main}Suppose that $S\neq\emptyset$.
\begin{enumerate}
  \item\emph{(Duality)} Let $z_0\in S^{S\times S}_{mn}$, and let $e\in S$ be the identity of the monoid $(S,z_0)$. Then there is a bijection $\phi$ from $S$ onto $S^*_{z_0}$ such that $\phi(e)=z_0$, and for each $a\in S$, the semigroup $(S,\phi(a))$ is isomorphic to the semigroup $(S^*_{z_0},\hat{a})$ via $\phi$. In particular, $(S^*_{z_0},\hat{e})$ is a monoid with identity $z_0$.
  \item Let $z_0\in S^{S\times S}_{gr}$, and let $e\in S$ be the identity of the group $(S,z_0)$. Then, in addition to the conclusions of (1), the following hold.
  \begin{enumerate}
    \item$S^*_{z_0}\subseteq S^{S\times S}_{gr}$, and for every $z\in S^*_{z_0}$, $S^*_z=S^*_{z_0}$, and the group $(S,z)$ is isomorphic to $(S,z_0)$ (hence by (1) it is also isomorphic to $(S^*_{z_0},\hat{a})$ for every $a\in S$).
    \item Let $S^{S\times S}_{gr}(e)$ be the set of those elements $z\in S^{S\times S}_{gr}$ for which $e$ is the identity of the group $(S,z)$. Then $S^{S\times S}_{gr}$ is partitioned into $S^{S\times S}_{gr}=\coprod\{S^*_{z}\,|\,z\in S^{S\times S}_{gr}(e)\}$ (disjoint union).
    \item Let us say that $z_1\in S^{S\times S}_{gr}(e)$ and $z_2\in S^{S\times S}_{gr}(e)$ are \emph{\textbf{equivalent}} and write $z_1\sim z_2$ if $(S,z_1)$ and $(S,z_2)$ are isomorphic (obviously $\sim$ is an equivalence relation). Let $R:=\{z_{\lambda}\,|\,\lambda\in\Lambda\}$ be a complete set of representatives of the equivalence classes in $S^{S\times S}_{gr}(e)/{\sim}$, where $\Lambda$ is an index set, and denote the equivalence class of $z_{\lambda}$ by $[z_{\lambda}]$. Then each element of $R$ equips $S$ with a distinct group structure on $S$, and the elements of $R$ exhaust all the possible group structures on $S$. In particular, if $S$ is a finite set, then the number of distinct group structures on $S$ is $|R|(=|\Lambda|)$. Furthermore, let us denote by $\operatorname{Sym}_e(S)$ the group of permutations on $S$ that fix $e$, and for each $z_{\lambda}\in R$, let us say that $\sigma_1,\in\operatorname{Sym}_e(S)$ and $\sigma_2,\in\operatorname{Sym}_e(S)$ are \emph{\textbf{$z_{\lambda}$-equivalent}} and write $\sigma_1\sim_{z_{\lambda}}\sigma_2$ if $\sigma_2^{-1}\circ\sigma_1\in\operatorname{Aut}(S,z_{\lambda})$, where $\operatorname{Aut}(S,z_{\lambda})$ is the group of automorphisms on $(S,z_{\lambda})$ which is a subgroup of $\operatorname{Sym}_e(S)$. Then $[z_{\lambda}]$ has the same cardinality as the quotient $\operatorname{Sym}_e(S)/\operatorname{Aut}(S,z_{\lambda})$. In particular, if $|S|=n$, a positive integer, then $|[z_{\lambda}]|=|S_{n-1}|/|\operatorname{Aut}(S,z_{\lambda})|=(n-1)!/|\operatorname{Aut}(S,z_{\lambda})|$, where $S_{n-1}$ is the symmetric group of degree $n-1$.
  \end{enumerate}
\end{enumerate}
\end{theorem}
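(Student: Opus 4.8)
The engine of the proof is one explicit map. Given $z_0\in S^{S\times S}_{mn}$ with identity $e$, define $\phi\colon S\to S^{S\times S}$ by letting $\phi(a)$ be the operation $b\,\phi(a)\,c:=bz_0az_0c$ (unambiguous by associativity of $z_0$). For part~(1) I would verify, in order: (i) $\phi(a)\in S^*_{z_0}$, since both instances of multi-associativity relating $\phi(a)$ and $z_0$ collapse to associativity of $z_0$ after expansion; (ii) $\phi(e)=z_0$, as $bz_0ez_0c=bz_0c$; (iii) $\phi$ is injective, by evaluating $b\,\phi(a)\,c=b\,\phi(a')\,c$ at $b=c=e$; (iv) $\phi$ maps onto $S^*_{z_0}$, namely every $z'\in S^*_{z_0}$ equals $\phi(ez'e)$, via the chain $bz_0(ez'e)z_0c=bz_0(ez'c)=bz'c$ whose first equality is $(xz'y)z_0c=xz'(yz_0c)$ at $x=y=e$ and whose second is $(bz_0x)z'c=bz_0(xz'c)$ at $x=e$ (both being the compatibility of $z'$ with $z_0$, combined with $e$ being the $z_0$-identity); (v) $\phi$ intertwines operations, since $\phi(b\,\phi(a)\,c)$ and $\phi(b)\,\hat a\,\phi(c)$ both send $(x,y)$ to $xz_0bz_0az_0cz_0y$. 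Thus $\phi\colon(S,\phi(a))\to(S^*_{z_0},\hat a)$ is an isomorphism for each $a$ (the target being a semigroup by Proposition~\ref{pr:dual}, as $z_0\in S^{S\times S}_{nd}$), and specializing $a=e$ shows $(S^*_{z_0},\hat e)$ is a monoid with identity $z_0$.

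For part~(2)(a), suppose $z_0\in S^{S\times S}_{gr}$. A second transport-of-structure map handles it: for $a\in S$ the bijection $\psi\colon b\mapsto bz_0a$ (inverse $b\mapsto bz_0a^{-1}$) satisfies $\psi(b\,\phi(a)\,c)=\psi(b)z_0\psi(c)$ by associativity, so $(S,\phi(a))\cong(S,z_0)$ is a group; hence $\phi(a)\in S^{S\times S}_{gr}$ for all $a$, and as $S^*_{z_0}=\phi(S)$ by part~(1), $S^*_{z_0}\subseteq S^{S\times S}_{gr}$. For $z\in S^*_{z_0}$: both $z$ and $z_0$ are nondegenerate with $z\in S^*_{z_0}$, so Lemma~\ref{lm:key} gives $S^*_z=S^*_{z_0}$; and $(S,z)\cong(S,z_0)$ since $z=\phi(a)$ for some $a$.

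For part~(2)(b), note first that when $z_0$ is a group operation the identity of $(S,\phi(a))$ is the $z_0$-inverse of $a$ (read off the defining-identity calculation). Coverage: for $w\in S^{S\times S}_{gr}$, apply part~(1) with $z_0:=w$ to obtain $\phi_w\colon S\to S^*_w$; taking $a_0$ the $w$-inverse of $e$, the operation $\phi_w(a_0)$ has identity $e$, so $\phi_w(a_0)\in S^{S\times S}_{gr}(e)$, while by (2)(a) applied to $w$ we get $S^*_{\phi_w(a_0)}=S^*_w$, which contains $w$; so $w$ lies in the union, and since each $S^*_z$ with $z\in S^{S\times S}_{gr}(e)$ is contained in $S^{S\times S}_{gr}$ by (2)(a), the union equals $S^{S\times S}_{gr}$. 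Disjointness: if $w\in S^*_{z_1}\cap S^*_{z_2}$ with $z_1,z_2\in S^{S\times S}_{gr}(e)$, then $S^*_{z_1}=S^*_w=S^*_{z_2}$ by (2)(a); writing $z_2=\phi_{z_1}(a)$ via part~(1) for $z_1$, the identity of $(S,z_2)$ is the $z_1$-inverse of $a$, which must be $e$, forcing $a=e$ (as $e$ is the $z_1$-identity), hence $z_2=\phi_{z_1}(e)=z_1$.

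For part~(2)(c), distinctness of the groups $(S,z_\lambda)$, $z_\lambda\in R$, is immediate from $R$ being a complete set of representatives; exhaustion follows because any $w\in S^{S\times S}_{gr}$ lies in some $S^*_z$ with $z\in S^{S\times S}_{gr}(e)$ by (2)(b), whence $(S,w)\cong(S,z)\cong(S,z_\lambda)$ for the $\lambda$ with $z\sim z_\lambda$ (giving the count $|R|$ when $S$ is finite). For the size of $[z_\lambda]$, transport structure once more: send $\sigma\in\operatorname{Sym}_e(S)$ to the group operation $z_\lambda^\sigma$ with $b\,z_\lambda^\sigma\,c:=\sigma(\sigma^{-1}(b)\,z_\lambda\,\sigma^{-1}(c))$. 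Then $\sigma\colon(S,z_\lambda)\to(S,z_\lambda^\sigma)$ is an isomorphism fixing $e$, so $z_\lambda^\sigma\in[z_\lambda]$; every $w\in[z_\lambda]$ equals $z_\lambda^\tau$ for any isomorphism $\tau\colon(S,z_\lambda)\to(S,w)$, and $\tau\in\operatorname{Sym}_e(S)$ because it fixes the common identity $e$; and $z_\lambda^{\sigma_1}=z_\lambda^{\sigma_2}$ iff $\sigma_2^{-1}\circ\sigma_1\in\operatorname{Aut}(S,z_\lambda)$, so the fibers of $\sigma\mapsto z_\lambda^\sigma$ are exactly the left cosets of $\operatorname{Aut}(S,z_\lambda)$. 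This descends to a bijection $\operatorname{Sym}_e(S)/\operatorname{Aut}(S,z_\lambda)\to[z_\lambda]$; and for $|S|=n$, $|\operatorname{Sym}_e(S)|=|S_{n-1}|=(n-1)!$, so Lagrange gives $|[z_\lambda]|=(n-1)!/|\operatorname{Aut}(S,z_\lambda)|$. The step I expect to be the genuine obstacle is the surjectivity in part~(1): one must recognize the correct preimage $ez'e$ and insert the identity at exactly the right positions in the two one-sided multi-associativity relations; past that, the proof is a chain of transport-of-structure constructions, with Lemma~\ref{lm:key} and Proposition~\ref{pr:dual} carrying the remaining bookkeeping.
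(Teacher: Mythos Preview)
Your proof is correct and follows essentially the same approach as the paper: the same map $\phi(a)=z_0\hat{a}z_0$, the same surjectivity argument via the preimage $ez'e$, the same transport map $\psi$ (yours is the inverse of the paper's $b\mapsto bz_0a^{-1}$), and the same $\sigma\mapsto z_\lambda^\sigma$ construction for~(2)(c). The only cosmetic difference is in the disjointness step of~(2)(b): once $S^*_{z_1}=S^*_{z_2}$, the paper finishes directly from compatibility and the shared identity via $az_1b=az_1(ez_2b)=(az_1e)z_2b=az_2b$, whereas you route through $\phi_{z_1}$ and the identity of $(S,\phi_{z_1}(a))$; both are valid and equally short.
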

\begin{proof}
\begin{enumerate}
  \item Define $\phi:S\to S^*_{z_0}$ by $\phi(a):=z_0\hat{a}z_0,\forall a\in S$. Clearly the range is in $S^*_{z_0}$ noting that $z_0$ is compatible with itself since $z_0\in S^{S\times S}_{mn}\subseteq S^{S\times S}_{sg}$. We shall show that this $\phi$ has the desired properties. It is obvious that $z_0=z_0\hat{e}z_0$, so that $\phi(e)=z_0$. It is also easy to see that $\phi$ is one-to-one, for $\phi(a)=\phi(b)\,(a,b\in S)$ yields that $a=e(z_0\hat{a}z_0)e=e\phi(a)e=e\phi(b)e=e(z_0\hat{b}z_0)e=b$. To see that $\phi$ is onto, first note that for every $z\in S^*_{z_0}$ and every $a,b\in S$, we have that $azb=az(ez_0b)=(aze)z_0b=((az_0e)ze)z_0b=(az_0(eze))z_0b$, which implies that the value of $eze$ completely determines $z$ as an element of $S^*_{z_0}$. When $a$ takes all elements of $S$, $e\phi(a)e$ takes all elements of $S$ since $e\phi(a)e=e(z_0\hat{a}z_0)e=a,\forall a\in S$. Thus $\phi$ is onto. Finally, the assertion that $\phi:(S,\phi(a))\to(S^*_{z_0},\hat{a})$ is an isomorphism follows from $d\phi(b\phi(a)c)f=(dz_0((bz_0a)z_0c))z_0f=((dz_0(bz_0a))z_0c)z_0f=(d\phi(b)a)\phi(c)f=d(\phi(b)\hat{a}\phi(c))f,\,\forall a,b,c,d,f\in S$.
  \item
  \begin{enumerate}
    \item Let $\phi$ be as in (1) and $z\in S^*_{z_0}$. Then by (1) there exists an $a\in S$ such that $z=\phi(a)$. We denote the inverse of each element $b\in S$ in the group $(S,z_0)$ by $b^{-1}$. Define $\psi$ from the group $(S,z_0)$ to the semigroup $(S,z)$ by $\psi(b):=bz_0a^{-1},\forall b\in S$. It is straightforward to check that $\psi$ is a homomorphism noting that $z=z_0az_0$, so that $(S,z)$ is a group and $S^*_{z_0}\subseteq S^{S\times S}_{gr}$, hence by Lemma~\ref{lm:key} $S^*_{z_0}=S^*_z$. It is also an easy routine work to check that $\psi$ is one-to-one and onto; thus $(S,z_0)$ is isomorphic to $(S,z)$. (In fact, $a^{-1}$ is the identity of $(S,z)$, and $a^{-1}z_0b^{-1}z_0a^{-1}$ is the inverse of $b\in S$ in $(S,z)$.)
    \item Suppose that $z_1,z_2\in S^{S\times S}_{gr}(e)$ and $S^*_{z_1}\cap S^*_{z_2}\ne\emptyset$. Pick $z_3\in S^*_{z_1}\cap S^*_{z_2}$ the right-hand side of which is a subset of $S^{S\times S}_{gr}$ by part~(a), then by Lemma~\ref{lm:key} $S^*_{z_1}=S^*_{z_3}=S^*_{z_2}$; that is, $z_1$ and $z_2$ are compatible. Therefore for all $a,b\in S$, $az_1b=az_1(ez_2b)=(az_1e)z_2b=az_2b$, hence $z_1=z_2$. Next let $z\in S^{S\times S}_{gr}$, and let $a$ be the identity of the group $(S,z)$, and let $e^{-1}$ be the inverse of $e$ in $(S,z)$. Define $z_4\in S^{S\times S}_{sg}$ by $z_4:=z\widehat{e^{-1}}z$. Clearly, $z_4\in S^*_z\subseteq S^{S\times S}_{gr}$, hence $(S,z_4)$ is a group and $z\in S^*_{z_4}$ as well. Since $ez_4b=eze^{-1}zb=azb=b$ and similarly $bz_4e=b$, $\forall b\in S$, we know that $e$ is the identity of $(S,z_4)$, and hence $z_4\in S^{S\times S}_{gr}(e)$.
    \item We shall show the only nontrivial statement that for each $z_{\lambda}\in R$, $[z_{\lambda}]$ has the same cardinality as $\operatorname{Sym}_e(S)/\operatorname{Aut}(S,z_{\lambda})$. For each $\sigma\in\operatorname{Sym}_e(S)$ define $z^{\sigma}\in S^{S\times S}$ by $\sigma(a)z^{\sigma}\sigma(b)=\sigma(az_{\lambda}b),\forall a,b\in S$. Note that this $z^{\sigma}$ is the only binary operation on $S$ that makes $\sigma:(S,z_{\lambda})\to(S,z^{\sigma})$ an isomorphism. Then $(S,z^{\sigma})$ is a group with identity $\sigma(e)=e$ which is isomorphic to $(S,z_{\lambda})$, and thus $z^{\sigma}\in[z_{\lambda}]$. Define a function $\varphi:\operatorname{Sym}_e(S)\to[z_{\lambda}]$ by $\varphi(\sigma)=z^{\sigma}$. It is easy to see that $\varphi$ is onto. Indeed, let $z\in[z_{\lambda}]$. Then there is an isomorphism $\sigma_0\in\operatorname{Sym}_e(S)$ from $(S,z_{\lambda})$ onto $(S,z)$; that is, $\sigma_0(az_{\lambda}b)=\sigma_0(a)z\sigma_0(b),\forall a,b\in S$. Thus $z=z^{\sigma_0}$, and hence $\varphi$ is onto. Now suppose that $\sigma_1,\sigma_2\in\operatorname{Sym}_e(S)$ and $z^{\sigma_1}=z^{\sigma_2}$. Then $\sigma_1(az_{\lambda}b)=\sigma_1(a)z^{\sigma_1}\sigma_1(b)=\sigma_1(a)z^{\sigma_2}\sigma_1(b)=\sigma_2(\sigma_2^{-1}(\sigma_1(a)))z^{\sigma_2}\sigma_2(\sigma_2^{-1}(\sigma_1(b)))=\sigma_2(\sigma_2^{-1}(\sigma_1(a))z_{\lambda}\sigma_2^{-1}(\sigma_1(b)))$, so that $\sigma_2^{-1}(\sigma_1(az_{\lambda}b))=\sigma_2^{-1}(\sigma_1(a))z_{\lambda}\sigma_2^{-1}(\sigma_1(b)),\linebreak\forall a,b\in S$, which tell us that $\sigma_2^{-1}\circ\sigma_1\in\operatorname{Aut}(S,z_{\lambda})$; that is, $\sigma_1\sim_{z_{\lambda}}\sigma_2$.
  \end{enumerate}
\end{enumerate}
\end{proof}
\begin{remark}\label{rm:main}{\em
\begin{enumerate}
  \item The key part in the proof of part~(1) of the theorem is that the value of $eze$ completely determines the structure of the semigroup $(S,z)$.
  \item In part~(1) of the theorem $S^*_{z_0}$ need not be a subset of $S^{S\times S}_{nd}$ in general even if $z_0\in S^{S\times S}_{mn}$ (see Example~\ref{ex:n=3}). Also it could happen that $\emptyset\ne S^*_{z_1}\cap S^*_{z_2}\subset S^{S\times S}_{sg}\setminus S^{S\times S}_{nd}$ for $z_1,z_2\in S^{S\times S}_{mn}$ (see also Example~\ref{ex:n=3}). For these reasons $S^{S\times S}_{mn}$ cannot be partitioned in general in the way we did for $S^{S\times S}_{gr}$ in part~(2a) of the theorem.
  \item If we express the binary operation $z_0\in S^{S\times S}_{mn}$ by concatenation and the new product $\phi(a)$ by ``$\,\cdot\,$'' in the proof of (1), then $b\cdot c=bac$ for $b,c\in S$. This means that each binary operation in $S^*_{z_0}$ can be obtained by ``sandwiching'' each element of $S$. In particular, if $z_0\in S^{S\times S}_{gr}$, then part~(2a) of the theorem is saying that every element $z\in S_{z_0}$ is ``equivalent'' and every element is qualified for an identity by redefining a binary operation by sandwiching.
  \item In (1) (respectively, (2)) of the theorem, if $S$ is equipped with a topology $\tau$ (the set of open sets in $S$), then $\tau^*:=\{\{z_0bz_0\,|\,b\in U\}\,|\,U\in\tau\}$ defines a topology on $S^*_{z_0}$, and with this topology $(S,\phi(a))$ is isomorphic to $(S^*_{z_0},\hat{a})$ as topological monoids (respectively, topological groups) via $\phi$ for each $a\in S$; that is, $\phi$ is a homeomorphism as well as an isomorphism.
  \item Given a binary operation $z\in S^{S\times S}$ and a permutation $\sigma$ on $S$, the binary operation $z^{\sigma}$ defined by $\sigma(a)z^{\sigma}\sigma(b)=\sigma(azb),\forall a,b\in S$ is the only one that makes $\sigma$ an isomorphism from the magma $(S,z)$ onto the magma $(S,z^{\sigma})$. Of course, $z^{\sigma}=z$ if and only if $\sigma\in\operatorname{Aut}(S,z)$.
  \item In (2c) of the theorem $\operatorname{Aut}(S,z_{\lambda})$ is not a normal subgroup of $\operatorname{Sym}_e(S)$ in general. See Example~\ref{ex:n=4}.
\end{enumerate}}
\end{remark}The following corollary tells us that no element $z\notin S^{S\times S}_{gr}$ is compatible with any element of $S^{S\times S}_{gr}$. This property distinguishes the elements of $S^{S\times S}_{gr}$ from the other binary operations, and provides a new characterization of group binary operations.
\begin{corollary}\label{co:main}Let $z_1\in S^{S\times S}_{gr}$ and $z_2\in S^{S\times S}$. Then either $S^*_{z_1}\cap S^*_{z_2}=\emptyset$ or $S^*_{z_1}=S^*_{z_2}$ occurs.
\end{corollary}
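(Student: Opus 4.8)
The plan is to reduce everything to Theorem~\ref{th:main}~(2a), using the observation that a binary operation shared by $S^*_{z_1}$ and $S^*_{z_2}$ is automatically a group operation, and that group operations have rigid duals.

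First dispose of the trivial alternative: if $S^*_{z_1}\cap S^*_{z_2}=\emptyset$ there is nothing to prove. So assume the intersection is nonempty and fix an element $z_3\in S^*_{z_1}\cap S^*_{z_2}$. Since $z_3\in S^*_{z_1}$ and $z_1\in S^{S\times S}_{gr}$, Theorem~\ref{th:main}~(2a) applied with $z_1$ in the role of $z_0$ tells us two things at once: $z_3\in S^{S\times S}_{gr}$, and $S^*_{z_3}=S^*_{z_1}$. Next, from $z_3\in S^*_{z_2}$ and the symmetry of compatibility (noted right after Definition~\ref{de:dual}) we get $z_2\in S^*_{z_3}$. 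Now apply Theorem~\ref{th:main}~(2a) a second time, this time with $z_3$ — which we have just shown lies in $S^{S\times S}_{gr}$ — in the role of $z_0$: every element of $S^*_{z_3}$, in particular $z_2$, has dual equal to $S^*_{z_3}$, hence $S^*_{z_2}=S^*_{z_3}$. Chaining the two equalities gives $S^*_{z_1}=S^*_{z_3}=S^*_{z_2}$, which is exactly the second alternative in the statement.

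The one point that requires care is why one cannot argue as in the proof of part~(2b), namely by invoking Lemma~\ref{lm:key} directly on the pair $(z_1,z_2)$: there $z_2$ was assumed to lie in $S^{S\times S}_{gr}(e)$, hence to be nondegenerate, whereas here $z_2$ is a completely arbitrary binary operation, so Lemma~\ref{lm:key} by itself would yield only the inclusion $S^*_{z_1}\subseteq S^*_{z_2}$ and not the reverse one. The substitute is the routing through $z_3$: sharing even a single compatible partner with the group operation $z_1$ is enough, by Theorem~\ref{th:main}~(2a), to force $z_3$ itself into $S^{S\times S}_{gr}$, and once $z_3$ is a group operation the same theorem pins down $S^*_{z_3}$ from the single element $z_2\in S^*_{z_3}$. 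I do not expect any genuine obstacle here beyond bookkeeping — in particular keeping straight which operation plays the role of $z_0$ in each of the two invocations of Theorem~\ref{th:main}~(2a).

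As a side remark worth recording after the proof: the corollary shows that the relation on $S^{S\times S}_{gr}$ defined by $z_1\approx z_2\iff S^*_{z_1}=S^*_{z_2}$ is an equivalence relation whose classes are exactly the sets $S^*_{z}$, so that Corollary~\ref{co:main} simultaneously reproves and slightly sharpens the partition statement of part~(2b) — now without restricting the identity element.
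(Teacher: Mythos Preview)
Your proof is correct and follows essentially the same route as the paper's: pick $z_3$ in the intersection, use Theorem~\ref{th:main}~(2a) to force $z_3\in S^{S\times S}_{gr}$ and $S^*_{z_3}=S^*_{z_1}$, then exploit $z_2\in S^*_{z_3}$ to close the loop. The only cosmetic difference is that for the second half the paper unwinds to Lemma~\ref{lm:key} (first deducing $z_2\in S^{S\times S}_{gr}$, then applying the lemma to get $S^*_{z_2}\subseteq S^*_{z_1}$), whereas you invoke the packaged statement of Theorem~\ref{th:main}~(2a) a second time with $z_3$ as $z_0$; the content is identical.
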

\begin{proof}Suppose that $S^*_{z_1}\cap S^*_{z_2}\ne\emptyset$ and pick $z_3\in S^*_{z_1}\cap S^*_{z_2}$. By Theorem~\ref{th:main}~(2a), $S^*_{z_1}\subseteq S^{S\times S}_{gr}$, so that $z_3\in S^{S\times S}_{gr}$, hence $S^*_{z_1}=S^*_{z_3}\subseteq S^*_{z_2}$ by Lemma~\ref{lm:key}. But $z_3\in S^*_{z_2}$ implies that $z_2\in S^*_{z_3}=S^*_{z_1}\subseteq S^{S\times S}_{gr}$, so that $S^*_{z_2}\subseteq S^*_{z_1}$ by Lemma~\ref{lm:key} again.
\end{proof}
\section{Examples}\label{section:examples}
Although our results in Section~\ref{section:results} apply to sets of any cardinalities, in this section we restrict ourselves to two simple examples of finite sets to illustrate situations in Theorem~\ref{th:main} and clarify remarks made in Sections~\ref{section:intro}~and~\ref{section:results}. These examples provide only commutative monoids or groups, but the results in Section~\ref{section:results} are valid for noncommutative cases as well.
\begin{example}\label{ex:n=3}{\em Let $S=\{a,b,c\}$, where $a$, $b$, and $c$ are distinct. Although $|S^{S\times S}|=19683$, $S^{S\times S}_{gr}$ consists of only $3$ elements, say $z_1$, $z_2$, and $z_3$, which are defined by the following Cayley tables.
$$\begin{array}{c|ccc}
z_1 & a & b & c\\
\hline
a & a & b & c\\
b & b & c & a\\
c & c & a & b
\end{array}\quad
\begin{array}{c|ccc}
z_2 & a & b & c\\
\hline
a & c & a & b\\
b & a & b & c\\
c & b & c & a
\end{array}\quad
\begin{array}{c|ccc}
z_3 & a & b & c\\
\hline
a & b & c & a\\
b & c & a & b\\
c & a & b & c
\end{array}$$It is easy to verify that $S^*_{z_1}=S^*_{z_2}=S^*_{z_3}=\{z_1,z_2,z_3\}=S^{S\times S}_{gr}$. The elements of $S$ define binary operations on $S^*_{z_1}(=S^*_{z_2}=S^*_{z_3})$ as follows.
$$\begin{array}{c|ccc}
\hat{a} & z_1 & z_2 & z_3\\
\hline
z_1 & z_1 & z_2 & z_3\\
z_2 & z_2 & z_3 & z_1\\
z_3 & z_3 & z_1 & z_2
\end{array}\quad
\begin{array}{c|ccc}
\hat{b} & z_1 & z_2 & z_3\\
\hline
z_1 & z_3 & z_1 & z_2\\
z_2 & z_1 & z_2 & z_3\\
z_3 & z_2 & z_3 & z_1
\end{array}\quad
\begin{array}{c|ccc}
\hat{c} & z_1 & z_2 & z_3\\
\hline
z_1 & z_2 & z_3 & z_1\\
z_2 & z_3 & z_1 & z_2\\
z_3 & z_1 & z_2 & z_3
\end{array}$$We see that $(S,z_1)\cong(S,z_2)\cong(S,z_3)\cong(S^*_{z_1},\hat{a})\cong(S^*_{z_1},\hat{b})\cong(S^*_{z_1},\hat{c})$ as parts~(1)~and~(2a) of Theorem~\ref{th:main} assert. In this case $S^{S\times S}_{gr}$ is partitioned into only one component which is itself, and $[z_1]=1$, which concludes the trivial fact that there is only one group structure of order $3$; that is the cyclic group of order $3$. In this example $\operatorname{Sym}_a(S)\cong S_2$ and it is known that the automorphism group on a cyclic group of order $3$ is a cyclic group of order $2$, so that $|\operatorname{Sym}_a(S)|/|\operatorname{Aut}(S,z_1)|=|S_2|/2=2!/2=1$ which is equal to $[z_1]$ as asserted in Theorem~\ref{th:main}~(2c).

Now let $z_4\in S^{S\times S}_{mn}\setminus S^{S\times S}_{gr}$ be defined in the left table below. Then it is easy to know that $S^*_{z_4}=\{z_4,z_5,z_6\}$ referring to Remark~\ref{rm:main}~(1) if necessary, where $z_5$ and $z_6$ are also defined below.
$$\begin{array}{c|ccc}
z_4 & a & b & c\\
\hline
a & a & b & c\\
b & b & a & c\\
c & c & c & c
\end{array}\quad
\begin{array}{c|ccc}
z_5 & a & b & c\\
\hline
a & b & a & c\\
b & a & b & c\\
c & c & c & c
\end{array}\quad
\begin{array}{c|ccc}
z_6 & a & b & c\\
\hline
a & c & c & c\\
b & c & c & c\\
c & c & c & c
\end{array}$$Although $z_4,z_5\in S^{S\times S}_{mn}$, $z_6\notin S^{S\times S}_{nd}$, which provides an example of the first statement of Remark~\ref{rm:main}~(2). The elements of $S$ define binary operations on $S^*_{z_4}$ as follows.
$$\begin{array}{c|ccc}
\hat{a} & z_4 & z_5 & z_6\\
\hline
z_4 & z_4 & z_5 & z_6\\
z_5 & z_5 & z_4 & z_6\\
z_6 & z_6 & z_6 & z_6
\end{array}\quad
\begin{array}{c|ccc}
\hat{b} & z_4 & z_5 & z_6\\
\hline
z_4 & z_5 & z_4 & z_6\\
z_5 & z_4 & z_5 & z_6\\
z_6 & z_6 & z_6 & z_6
\end{array}\quad
\begin{array}{c|ccc}
\hat{c} & z_4 & z_5 & z_6\\
\hline
z_4 & z_6 & z_6 & z_6\\
z_5 & z_6 & z_6 & z_6\\
z_6 & z_6 & z_6 & z_6
\end{array}$$We see that $(S,z_4)\cong(S^*_{z_4},\hat{a})$, $(S,z_5)\cong(S^*_{z_4},\hat{b})$, and $(S,z_6)\cong(S^*_{z_4},\hat{c})$ as Theorem~\ref{th:main}~(1) asserts.

Next we define $z_7\in S^{S\times S}_{mn}\setminus S^{S\times S}_{gr}$ by the left table below. Then we see that $S^*_{z_7}=\{z_7,z_8,z_6\}$, where $z_8$ is defined by the right table below.
$$\begin{array}{c|ccc}
z_7 & a & b & c\\
\hline
a & a & b & c\\
b & b & b & c\\
c & c & c & c
\end{array}\quad
\begin{array}{c|ccc}
z_8 & a & b & c\\
\hline
a & b & b & c\\
b & b & b & c\\
c & c & c & c
\end{array}$$Two distinct sets $S^*_{z_4}$ and $S^*_{z_7}$ with $z_4,z_7\in S^{S\times S}_{mn}$ share a common element $z_6\in S^{S\times S}_{sg}\setminus S^{S\times S}_{nd}$, which provides an example of the second statement of Remark~\ref{rm:main}~(2). It is not hard to see that $S^*_{z_6}$ consists of all elements $z$ of the form
$$\begin{array}{c|ccc}
z & a & b & c\\
\hline
a & * & * & c\\
b & * & * & c\\
c & c & c & c
\end{array}$$where each $*$ represents any element of $S$ and different $*$'s can take different elements of $S$. Thus $|S^*_{z_6}|=3^4=81$, and $S^*_{z_4}\subsetneq S^*_{z_6}$ with $z_4\in S^*_{z_6}$, which provides an example of a remark in the paragraph after Definition~\ref{de:dual}. Also note that $z_4$ and $z_7$ are not compatible while $z_4,z_7\in S^*_{z_6}$, which provides an example of the remark right before Proposition~\ref{pr:dual}.}
\end{example}
\begin{example}\label{ex:n=4}{\em Let $S:=\{a,b,c,d\}$, where $a$, $b$, $c$, and $d$ are distinct. Let $z_1,z_2,z_3,z_4\in S^{S\times S}_{gr}$ be defined as follows.
$$\begin{array}{c|cccc}
z_1 & a & b & c &d\\
\hline
a & a & b & c & d\\
b & b & c & d & a\\
c & c & d & a & b\\
d & d & a & b & c
\end{array}\quad
\begin{array}{c|cccc}
z_2 & a & b & c & d\\
\hline
a & a & b & c & d\\
b & b & a & d & c\\
c & c & d & b & a\\
d & d & c & a & b
\end{array}\quad
\begin{array}{c|cccc}
z_3 & a & b & c & d\\
\hline
a & a & b & c & d\\
b & b & d & a & c\\
c & c & a & d & b\\
d & d & c & b & a
\end{array}
\quad
\begin{array}{c|cccc}
z_4 & a & b & c & d\\
\hline
a & a & b & c & d\\
b & b & a & d & c\\
c & c & d & a & b\\
d & d & c & b & a
\end{array}$$It is easy to verify that the above are all possible groups of which $a$ is the identity, hence by Theorem~\ref{th:main}~(2b) we can conclude that $S^{S\times S}_{gr}=S^*_{z_1}\amalg S^*_{z_2}\amalg S^*_{z_3}\amalg S^*_{z_4}$ (disjoint union). As one can easily verify, $(S,z_1)\cong(S,z_2)\cong(S,z_3)$ are cyclic groups of order~$4$, and $(S,z_4)$ is a Klein four-group, so we reconfirm the well-known fact that there are only two distinct group structures of order~$4$, and we have that $|[z_1]|=3$ and $|[z_4]|=1$. In this example $\operatorname{Sym}_a(S)\cong S_3$. Since it is known that the automorphism group on a cyclic group of order~$4$ is a cyclic group of order~$2$, $|\operatorname{Sym}_a(S)|/|\operatorname{Aut}(S,z_1)|=|S_3|/2=3!/2=3$ which is equal to $|[z_1]|$. It is also known that the automorphism group on a Klein four-group is isomorphic to the symmetric group $S_3$, so that $|\operatorname{Sym}_a(S)|/|\operatorname{Aut}(S,z_4)|=|S_3|/|S_3|=1$ which is equal to $|[z_4]|$.

Now let $\sigma_0$ be the permutation on $S$ interchanging $b$ and $c$, and let $\tau_0$ be the permutation on $S$ interchanging $b$ and $d$. Then $\sigma_0\in\operatorname{Sym}_e(S)$, and it is easy to check that $\tau_0\in\operatorname{Aut}(S,z_1)$ and that $z^{\tau_0\circ\sigma_0}=z_3$. (For the superscript notation, see the proof of Theorem~\ref{th:main}~(2c).) However, for any $\tau\in\operatorname{Aut}(S,z_1)$, $z^{\sigma_0\circ\tau}=z_2$, so $z^{\tau_0\circ\sigma_0}\ne z^{\sigma_0\circ\tau}$. Since $z^{\sigma}$ is the only bilinear operation that makes $\sigma:(S,z_1)\to(S,z^{\sigma})$ isomorphism (see also the proof of Theorem~\ref{th:main}~(2c)), $\sigma_0\circ\tau\ne\tau_0\circ\sigma_0$. Thus $\sigma_0\operatorname{Aut}(S,z_1)\ne\operatorname{Aut}(S,z_1)\sigma_0$, and hence $\operatorname{Aut}(S,z_1)$ is not a normal subgroup of $\sigma_0\in\operatorname{Sym}_a(S)$ as noted in Remark~\ref{rm:main}~(6).}
\end{example}

  \vspace{4 mm}

\end{document}